\definecolor{carminepink}{rgb}{0.92, 0.3, 0.26}
\definecolor{burntsienna}{rgb}{0.91, 0.45, 0.32}
\definecolor{forestgreen}{rgb}{0.13, 0.55, 0.13}
\crefname{equation}{}{}
\crefname{lem}{Lemma}{Lemmas}
\crefname{section}{Section}{Sections}
\crefname{subsubsubsection}{Section}{Sections}
\crefname{rem}{Remark}{Remarks}
\crefname{cor}{Corollary}{Corollaries}
\crefname{figure}{Figure}{Figures}
\crefname{table}{Table}{Tables}
\Crefname{lem}{Lemma}{Lemmas}
\Crefname{line}{Line}{Lines}
\Crefname{fact}{Fact}{Facts}
\crefname{thm}{Theorem}{Theorems}
\crefname{def}{Definition}{Definitions}
\crefname{assumption}{Assumption}{Assumptions}
\newtheorem{thm}{Theorem}[section]
\newtheorem{defn}[thm]{Definition}
\newtheorem{lem}[thm]{Lemma}
\newtheorem{cor}[thm]{Corollary}
\newtheorem{fact}[thm]{Fact}
\theoremstyle{remark}
\newtheorem{rmk}{Remark}[section]
\title{On Algorithmic Robustness of Corrupted Markov Chains
}
\author{Jason Gaitonde\\
Duke University\\
\texttt{jason.gaitonde@duke.edu}
\and Elchanan Mossel\\
Massachusetts Institute of Technology\\
\texttt{elmos@mit.edu}}
\date{}
\begin{document}
\maketitle
\begin{abstract}
We study the algorithmic robustness of  
general finite Markov chains in terms of their stationary distributions to general, adversarial corruptions of the transition matrix.  
We show that for Markov chains admitting a spectral gap, variants of the \emph{PageRank}
chain are robust in the sense that, given an \emph{arbitrary} corruption of the edges emanating from an $\epsilon$-measure of the nodes, the PageRank distribution of the corrupted chain will be $\mathsf{poly}(\varepsilon)$ close in total variation to the 
original distribution under mild conditions on the restart distribution. 
Our work thus shows that PageRank serves as a simple regularizer against broad, realistic corruptions with algorithmic guarantees that are dimension-free and scale gracefully in terms of necessary and natural parameters. 
\end{abstract}
\newpage

\section{Introduction}

Markov chains are fundamental and ubiquitous models of stochastic processes, with profound real-world applications across biology, computer science, economics, probability theory, and statistics, among other scientific domains. 
Markov chains provide a mathematically rich and convenient modeling abstraction of random processes in complex systems that satisfy a natural 
property: the random transition to the next state depends only on the current state, and not on the entire trajectory. Given a state space $\mathcal{X}$, the entire evolution of a Markov chain can thus be compactly described by a row-stochastic matrix $\mathsf{P}\in \mathbb{R}^{\mathcal{X}\times \mathcal{X}}$, where $\mathsf{P}_{ij}$ denotes the probability of transitioning to state $j$ given that the system is at state $i$.

A central object of interest in the study of Markov chains is the \emph{stationary distribution}, which (under standard conditions) is the unique distribution $\pi$ that remains invariant upon transitioning according to the chain. Equivalently, $\pi$ is the unique distribution on $\mathcal{X}$ such that $\pi \mathsf{P}=\pi$, where we write the distribution as a row vector. A fundamental result in the theory of Markov chains is that under mild conditions the stationary distribution also describes the long-run behavior of a stochastic system whose trajectory evolves according to the underlying transitions of the Markov process. As a result, the stationary distribution will also capture important statistical information about the macroscopic properties of the system from its microscopic dynamics, for instance as a measure of centrality or importance of states, equilibrium in game-theoretic or physical systems, and long-run behavior in important operations settings like queueing and ride-sharing.

While any Markov chain is associated to its stationary distribution, the stationary distribution can often be the more important statistical object in its own right. A modern perspective introduced by Metropolis, et al.~\cite{metropolis1953equation} was to \emph{design} Markov chains such that the desired distribution $\pi$ is stationary, so that one may sample using the chain. This approach has been applied extensively in statistics and engineering with notable examples including the Metropolis algorithm~\cite{metropolis1953equation}, Gibbs sampling ~\cite{geman1984stochastic}, and the PageRank algorithm~\cite{brin1998anatomy}. 
Understanding the mixing times of such algorithms has been a major area of research in probability and theoretical computer science (see~\cite{montenegro2006mathematical,levin2017markov} for textbook treatments). 

How robust is the stationary distribution to \emph{natural} forms of corruption in the underlying Markov chain? A key motivating example arises in the context of directed networks of Internet websites. Here, each state in the Markov chain represents a website, and there is a directed (possibly weighted) edge from one site to another if the first links to the second. A natural Markov model for such a network is a variant of the random walk, where a user navigates from one site to another by randomly following hyperlinks. Brin and Page~\cite{brin1998anatomy} and Kleinberg~\cite{DBLP:journals/jacm/Kleinberg99} hypothesized a remarkable connection between random walks and site importance: a website should be considered important if it is linked to by other important websites and thus if it is visited more often. This correspondence naturally leads to defining centrality in terms of the stationary distribution, a perspective that has since been widely adopted across many other diverse domains (see e.g. the survey of~\cite{DBLP:journals/siamrev/Gleich15} on applications of PageRank across the natural and social sciences).

In many of the applications above, states (e.g. agents or websites) may have significant incentives to artificially 
modify the 
stationary distribution by altering the transitions from sites that they can control. This behavior has been extensively seen in real-world actors aiming to increase visibility in Google search~\cite{genius,parasite}. An important feature is that corruptions of transitions from such states can be \emph{essentially arbitrary}. 
Other perturbations of chains can be the result of small innocent computational or statistical errors. 
For example, transitions from states may be construed using statistical models 
and sampling or modeling errors in these statistical models may result in minor differences in a large proportion of all outgoing transitions which could compound globally. In realistic applications where the essential algorithmic task is determining the original stationary distribution, it is desired to be tolerant to both kinds of corruptions.

In this work, we consider this algorithmic problem of recovering the stationary distribution of a Markov chain under practically motivated corruptions of the transition matrix:

\begin{quote}
\centering
    \emph{Under what conditions and forms of adversarial corruptions can the stationary distribution of be a Markov chain be algorithmically recovered given the corrupted transition matrix?}
\end{quote}

Of course, given the wide-ranging applications of Markov chains, questions of this form have been studied for several decades, dating back to at least Schweitzer~\cite{schweitzer1968perturbation}. But to the best of our knowledge, the question of what conditions provably enable algorithmic recovery for \emph{arbitrary corruptions} of the form above has not been previously investigated. We discuss important prior results, which predominantly take a matrix perturbation approach, as well as their limitations in more detail in \Cref{sec:related}; roughly speaking, these results typically require that \emph{each} state's transitions are perturbed by at most $o(1)$. Therefore, these approaches do not furnish a robust recovery algorithm when corruptions of outgoing transitions are arbitrary for 
a small fraction of the nodes, as is the case for 
adversarial states in real-world settings mentioned above. 

Our main results are thus to identify simple conditions that enable robust recovery of the stationary condition under a large class of adversarial corruptions. In particular, we allow an adversary to arbitrarily corrupt an $\varepsilon$-fraction of the transitions, as measured by the unknown stationary distribution over directed edges (see \Cref{def:corruption} for a precise definition). 
Crucially, this formulation is general enough to allow for completely arbitrary corruptions on a $\varepsilon$-fraction of the state space with respect to the \emph{unknown} stationary distribution $\pi$, which can be arbitrarily far from the stationary distribution of the corrupted chain.

To illustrate a simple, but already powerful special case of our general results, we show the following:

\begin{thm}[informal, \Cref{cor:spread}]
\label{thm:informal_1}
    Suppose that $\mathsf{P}$ is a Markov chain on $n$ states with spectral gap $\gamma > 0$ and with stationary distribution $\pi$ such that $n\pi(x)\in [\alpha,\alpha^{-1}]$ for all $x$ and some $\alpha > 0$.
    Then there is an efficient algorithm that, given a transition matrix $\widetilde{\mathsf{P}}$ of $\mathsf{P}$ where any $\varepsilon$-fraction of states have \emph{arbitrarily} corrupted transitions, computes $\hat{\pi}$ 
    such that the total variation between
    $\hat{\pi}$ and $\pi$ is 
    $\widetilde{O}\left(\frac{1}{\alpha}\sqrt{\frac{\varepsilon}{\gamma}}\right)$.
\end{thm}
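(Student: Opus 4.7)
The plan is to output $\hat\pi$ as the stationary distribution of the \emph{corrupted PageRank chain}
\[
\widetilde{\mathsf{P}}_\beta := (1-\beta)\widetilde{\mathsf{P}} + \beta\, \mathbf{1}\mu^\top,
\]
where $\mu$ is the uniform distribution on $\mathcal{X}$ (which by the density hypothesis is pointwise comparable to $\pi$) and $\beta \in (0,1)$ is to be chosen at the end. Because $\widetilde{\mathsf{P}}_\beta$ teleports to $\mu$ with probability $\beta$ at each step, it contracts mean-zero signed measures in total variation by $1-\beta$, so $\hat\pi$ can be computed to arbitrary accuracy via power iteration in $\widetilde O(1/\beta)$ matrix--vector multiplications regardless of what the corrupted transitions are.

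To bound $\|\hat\pi - \pi\|_{TV}$, I would introduce the stationary distribution $\pi_\beta$ of the \emph{uncorrupted} PageRank chain $\mathsf{P}_\beta := (1-\beta)\mathsf{P} + \beta\mathbf{1}\mu^\top$ and split
\[
\|\hat\pi - \pi\|_{TV} \le \underbrace{\|\hat\pi - \pi_\beta\|_{TV}}_{\text{corruption}} + \underbrace{\|\pi_\beta - \pi\|_{TV}}_{\text{restart bias}}.
\]
For the bias term, the series representation $\pi_\beta = \beta\sum_{k \ge 0}(1-\beta)^k \mu\mathsf{P}^k$ combined with the $L^2(\pi)$ contraction from the spectral gap gives $\|\mu\mathsf{P}^k-\pi\|_{TV} \le \tfrac{1}{2}(1-\gamma)^k \|d\mu/d\pi-1\|_{L^2(\pi)}$; the density bound $n\pi(x)\in[\alpha,\alpha^{-1}]$ controls $\|d\mu/d\pi - 1\|_{L^2(\pi)}$ by $O(1/\alpha)$, and summing the geometric series yields $\|\pi_\beta-\pi\|_{TV} \lesssim \beta/(\alpha\gamma)$.

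For the corruption term, I would use the probabilistic interpretation: both $\hat\pi$ and $\pi_\beta$ are laws of the walker's position at a geometric stopping time $T \sim \mathrm{Geom}(\beta)$, started from $\mu$ and evolving by $\widetilde{\mathsf{P}}$, resp.\ $\mathsf{P}$. Synchronously couple the two walks to agree on transitions outside the corrupted set $B$; then they only disagree once the uncorrupted walk visits $B$, giving
\[
\|\hat\pi - \pi_\beta\|_{TV} \;\le\; \Pr\big[\exists\, 0 \le s < T \colon Y_s \in B\big] \;\le\; \sum_{s \ge 0}(1-\beta)^s (\mu\mathsf{P}^s)(B).
\]
Applying Cauchy--Schwarz in $L^2(\pi)$ against $1_B$ together with spectral decay yields $(\mu\mathsf{P}^s)(B) \le \pi(B) + \sqrt{\pi(B)}(1-\gamma)^s/\alpha \le \varepsilon + \sqrt{\varepsilon}(1-\gamma)^s/\alpha$. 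Summing gives $\|\hat\pi - \pi_\beta\|_{TV} \lesssim \varepsilon/\beta + \sqrt{\varepsilon}/(\alpha(\beta+\gamma))$. Optimizing $\beta \sim \sqrt{\varepsilon\gamma}$ balances $\varepsilon/\beta$ against $\beta/(\alpha\gamma)$, each of order $\widetilde{O}(\alpha^{-1}\sqrt{\varepsilon/\gamma})$, matching the claimed rate after hiding lower-order terms in $\widetilde O(\cdot)$.

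The main obstacle I anticipate is squeezing the right power of $\varepsilon$ out of the burn-in contribution to $(\mu\mathsf{P}^s)(B)$: because the walk starts from $\mu$ rather than from $\pi$, a crude TV-decay bound would only give $(\mu\mathsf{P}^s)(B) \le \pi(B) + (1-\gamma)^s/\alpha$, killing the $\sqrt{\varepsilon}$ savings and producing an inflated bound of order $1/(\alpha\gamma)$. The cleverer step is the Cauchy--Schwarz against the \emph{specific} test function $1_B$ in $L^2(\pi)$, which simultaneously exploits the spectral contraction of the zero-mean part of $d\mu/d\pi$ and the bound $\pi(B) \le \varepsilon$; this is what allows the corruption term to scale as $\sqrt{\varepsilon/\gamma}/\alpha$ rather than $1/(\alpha\gamma)$ at the chosen $\beta$.
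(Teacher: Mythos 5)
Your high-level architecture is exactly the paper's: define the PageRank chains $\mathsf{P}_\beta$ and $\widetilde{\mathsf{P}}_\beta$ with uniform restart $\mu$, decompose $\|\hat\pi-\pi\|_{\mathsf{TV}}\le\|\hat\pi-\pi_\beta\|_{\mathsf{TV}}+\|\pi_\beta-\pi\|_{\mathsf{TV}}$, bound the bias term via the geometric series representation of $\pi_\beta$ and spectral decay, and bound the corruption term via the geometric-restart coupling. That all matches \Cref{cor:pr_close}, \Cref{cor:corrupted_close}, and \Cref{lem:coupling_main}.

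However, your treatment of the corruption term has a genuine gap that prevents you from reaching the claimed rate. You bound $(\mu\mathsf{P}^s)(B)$ by Cauchy--Schwarz in $L^2(\pi)$ against $\mathbf{1}_B$, producing $(\mu\mathsf{P}^s)(B)\le\pi(B)+(1-\gamma)^s\|\mu/\pi-1\|_{2,\pi}\sqrt{\pi(B)}$, and then sum. The resulting cross term $\sqrt{\pi(B)}\cdot\|\mu/\pi-1\|_{2,\pi}\,/(\beta+\gamma)\approx\sqrt{\varepsilon}/(\alpha\gamma)$ (at the choice $\beta\sim\sqrt{\varepsilon\gamma}$, where $\beta\le\gamma$) is \emph{not} a lower-order term: it exceeds the target $\sqrt{\varepsilon/\gamma}/\alpha$ by a factor of $1/\sqrt{\gamma}$, and no re-optimization of $\beta$ fixes this --- at best you reach something like $\varepsilon^{1/4}/\sqrt{\gamma}$ up to $\alpha$-factors. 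Your sketch asserts that the two terms you do balance are each $\widetilde{O}(\alpha^{-1}\sqrt{\varepsilon/\gamma})$ and then absorbs the cross term into the $\widetilde{O}$, but that absorption is unjustified. (You also drop the $1/\alpha$ in $\pi(B)\le\varepsilon/\alpha$, but that is a minor bookkeeping issue.)

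The fix is exactly what the paper does in \Cref{cor:corrupted_close}: use H\"older with $p=\infty$, $q=1$ rather than Cauchy--Schwarz. Concretely, exploit the pointwise bound $\|\mu/\pi\|_\infty\le 1/\alpha$ together with contractivity of the adjoint (\Cref{fact:contract} and \cref{eq:contract}) to get $\|\pi_\beta/\pi\|_\infty\le\|\mu/\pi\|_\infty$, and hence $\pi_\beta(B)\le\|\pi_\beta/\pi\|_\infty\,\pi(B)\le\varepsilon/\alpha^2$ \emph{with no burn-in cross term at all}. That is precisely what kills the $\sqrt{\varepsilon}/\gamma$ contribution: your ``cleverer step'' of $L^2$ Cauchy--Schwarz corresponds to the $p=q=2$ instantiation of the paper's general \Cref{thm:main_body}, which only gives a $\varepsilon^{1/(2q)}=\varepsilon^{1/4}$ rate; the $\sqrt{\varepsilon}$ rate in \Cref{cor:spread} specifically requires the $L^\infty$ smoothness of $\mu$ relative to $\pi$. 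Once you replace Cauchy--Schwarz by $L^\infty$--$L^1$ H\"older and balance $\varepsilon/(\alpha^2\beta)$ against $\beta/\gamma$ (taking $\beta\sim\sqrt{\varepsilon\gamma}/\alpha$), the argument closes and matches the paper's bound.
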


In fact, we show that the algorithm achieving this is simply an instantiation of the seminal PageRank distribution of Brin and Page~\cite{brin1998anatomy}. In general, the same guarantee can be achieved under mild smoothness conditions that we identify on the restart distribution used in the algorithm. In the setting of dynamic web networks, where the stationary distribution is updated on a regular basis, 
this assumption will be quite natural and could be achieved simply by using a previously computed stationary distribution. 

\subsection{Main Results and Techniques}
\label{sec:overview}

We now describe our results in more detail; we defer to  \Cref{sec:prelims} for all formal definitions used in this section. The main focus of our work is to algorithmically recover the stationary distribution under a new, important class of corruptions of a Markov chain:

\begin{defn}[Corrupted Markov Chains]
\label{def:corruption}
    Let $\mathcal{X}$ be a finite state space and let $\mathsf{P}$ be a Markov transition matrix on $\mathcal{X}$ with stationary measure $\pi$. A transition matrix $\widetilde{\mathsf{P}}$ is a \textbf{$\varepsilon$-corruption} of $\mathsf{P}$ if it holds that
    \begin{equation*}
        \sum_{x,y\in \mathcal{X}}\pi(x)\vert \mathsf{P}(x,y)-\widetilde{\mathsf{P}}(x,y)\vert\leq \varepsilon.
    \end{equation*}
    We write $\widetilde{\pi}$ for a stationary distribution of the corrupted chain.
\end{defn}
We remark that this definition is not symmetric since the corruption is measured with respect to the stationary distribution of the original chain $\pi$, which we stress is unknown. This choice is well-motivated in at least three respects: first, it is easy to find examples where corrupting even a single site can yield otherwise indistinguishable chains that share all other natural properties (see \Cref{rmk:pi_neccessary}) but where the new stationary distribution is far, so one must weight the corruptions by the original importance of the site to obtain any guarantees. Second, it is natural to expect that it is more costly for an adversary to manipulate transitions at important sites in the original chain, so \Cref{def:corruption} can also be viewed as a budget constraint. Finally, it is reasonable to expect that a central system administrator is more capable of observing macroscopic changes at large states compared to small states, making larger corruptions harder to implement. 

Importantly, this formulation unifies and broadly extends two important settings where we may hope to approximately recover the stationary distribution:
\begin{itemize}
    \item First, consider the setting where \emph{each} site has transitions that are corrupted by at most $\varepsilon$ in total variation. This is the standard setting captured by all prior work that we are aware of and naturally models the robustness of Markov chains to small sampling errors in each step. In this case, it is immediate to see that this is captured by \Cref{def:corruption} since we have a uniform bound of the difference at each site, independent of the actual stationary distribution.
    \item On the other extreme, consider the setting where an adversary controls a subset $\mathcal{C}\subseteq \mathcal{X}$ of states such that $\pi(\mathcal{C})\leq \varepsilon/2$. In this case, the adversary may set \emph{completely arbitrary transitions} out of these states to produce a $\varepsilon$-corruption according to \Cref{def:corruption}. In important settings where $\pi(x)=\Theta(1/n)$ for all states $x\in \mathcal{X}$, this corresponds to a robust statistics framework where the adversary controls a $\Theta(\varepsilon)$ fraction of rows of the matrix $\mathsf{P}$. Again, note that in this case, the corrupted stationary distribution can be \emph{arbitrarily far} from the uncorrupted distribution since even a single arbitrarily corrupted state can be absorbing.
\end{itemize}

We now formally define the PageRank chain:
\begin{defn} \label{def:page_rank}
Given a Markov transition matrix 
$\mathsf{P}$, distribution $\mu$ on the states and $0 \leq \delta \leq 1$, the 
\emph{PageRank chain} with parameter $\delta$ and restart distribution $\mu$
is defined by:
\[
\mathsf{P}(\delta):=(1-\delta)\mathsf{P}+\delta \bm{1}^T\mu.
\]
\end{defn}

The starting point for our results is that prior work has already established the robustness of chains to small site-wise perturbations under sufficiently fast mixing. 
In particular, it is well-known (c.f. \Cref{cor:pr_close}, see e.g. Mitrophanov~\cite{mitrophanov2003stability} and Vial and Subramanian~\cite{subramanian}) 
that if $\mathsf{P}$ has a \emph{spectral gap} $\gamma$, then for any choice of $\delta$ and restart distribution $\mu$, the corresponding PageRank stationary distribution $\pi_{\delta}$ satisfies a total variation distance bound of
\begin{equation*}
    d_{\mathsf{TV}}(\pi,\pi_{\delta})\lesssim \frac{\delta}{\gamma}.
\end{equation*}

Our goal is to 
obtain similar guarantees for the more realistic corruptions permitted under \Cref{def:corruption}. Our key insight is that we can show that 
under the mild condition that the restart distribution $\mu$ satisfies a bound of $\|\mu/\pi\|_{p,\pi}\leq \beta$ for some choice of $p>1$ (see \Cref{sec:prelims} for the precise definition of the norm), the PageRank version of the \emph{corrupted chain} will be close to the PageRank distribution of the \emph{uncorrupted chain}. If we write the corresponding PageRank matrix of the corruption as $\widetilde{\mathsf{P}}(\delta)$ with stationary distribution $\widetilde{\pi}_{\delta}$, then we can show (\Cref{cor:corrupted_close}) that:
\begin{equation*}
    d_{\mathsf{TV}}(\pi_{\delta},\widetilde{\pi}_{\delta})\lesssim \frac{\beta\varepsilon^{1/q}}{\delta},
\end{equation*}
where $q\geq 1$ is the dual exponent of $p$ satisfying $1/p+1/q=1$. This step crucially relies on the restart interpretation and contractiveness of Markov operators to translate from the guarantee that $\widetilde{\mathsf{P}}$ is a $\varepsilon$-corruption measured with respect to the unknown distribution $\pi$.

At this point, we may use the triangle inequality and balance terms by setting $\delta$ appropriately to establish our main result:
\begin{thm}
\label{thm:intro_main}
     Let $\mathsf{P}$ be a Markov chain with stationary distribution $\pi$ and spectral gap at least $\gamma$, and suppose that $\widetilde{\mathsf{P}}$ is a $\varepsilon$-corruption. Given the corrupted chain $\widetilde{\mathsf{P}}$ and a restart distribution $\mu$ that is $(\beta,p)$-smooth with respect to $\pi$, then PageRank with restart distribution $\mu$ and suitable choice of $\delta$ outputs a distribution $\widehat{\pi}$ such that
    \begin{equation*}
        d_{\mathsf{TV}}(\widehat{\pi},\pi)=\widetilde{O}\left(\sqrt{\frac{\beta\varepsilon^{1/q}}{\gamma}}\right).
    \end{equation*}
\end{thm}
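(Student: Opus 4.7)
The plan is to apply the triangle inequality with the PageRank distribution $\pi_{\delta}$ of the \emph{uncorrupted} chain as an intermediate point, and then to choose $\delta$ to balance the two resulting error terms. Concretely, the algorithm outputs $\widehat{\pi}=\widetilde{\pi}_{\delta}$, the stationary distribution of the corrupted PageRank chain $\widetilde{\mathsf{P}}(\delta)$ with restart distribution $\mu$; in practice this is computed (to sufficient accuracy) by power iteration, which converges in $\widetilde{O}(1/\delta)$ steps since $\widetilde{\mathsf{P}}(\delta)$ has spectral gap at least $\delta$ by construction.

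Adding and subtracting $\pi_{\delta}$ inside the total variation gives
\begin{equation*}
d_{\mathsf{TV}}(\widehat{\pi},\pi)\leq d_{\mathsf{TV}}(\widetilde{\pi}_{\delta},\pi_{\delta})+d_{\mathsf{TV}}(\pi_{\delta},\pi).
\end{equation*}
The second term measures the stability of the PageRank fixed point of the uncorrupted chain to the restart perturbation, so \Cref{cor:pr_close} (the Mitrophanov-type bound recorded in the overview) yields $O(\delta/\gamma)$. The first term compares the two PageRank fixed points built from $\mathsf{P}$ and $\widetilde{\mathsf{P}}$ under the same restart $\mu$, and \Cref{cor:corrupted_close} bounds it by $O(\beta\varepsilon^{1/q}/\delta)$ via the $(\beta,p)$-smoothness of $\mu$ together with the $\varepsilon$-corruption hypothesis. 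Summing gives the combined bound $O(\delta/\gamma+\beta\varepsilon^{1/q}/\delta)$.

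The remaining step is the optimization. By AM–GM, the suitable choice is $\delta=\sqrt{\gamma\beta\varepsilon^{1/q}}$, at which the two terms coincide and the overall bound becomes $O(\sqrt{\beta\varepsilon^{1/q}/\gamma})$, matching the statement; the $\widetilde{O}$ absorbs the logarithmic overhead of approximating $\widetilde{\pi}_{\delta}$ via power iteration. This choice of $\delta$ lies in $[0,1]$ exactly in the regime where the target bound is nontrivial, so no case analysis is needed.

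At this level the proof is essentially a calculus exercise on top of \Cref{cor:pr_close} and \Cref{cor:corrupted_close}. The genuinely hard part — and what I expect is the real work in the paper — is \Cref{cor:corrupted_close}: one must convert a $\pi$-weighted $L^{1}$ bound on the transition discrepancy, where $\pi$ is \emph{unknown} to the algorithm and can be arbitrarily far from $\widetilde{\pi}$, into a total variation bound on the corresponding PageRank fixed points. The restart interpretation is what allows the unknown-$\pi$ issue to be finessed (each step regenerates from $\mu$ with probability $\delta$, capping how long a single corrupted row can be exploited), and the appearance of the dual exponent $q$ in the conclusion strongly suggests that the translation uses Hölder's inequality pairing the density $\mu/\pi$ (controlled in the $p$-norm by $\beta$) against the $\pi$-weighted corruption measure.
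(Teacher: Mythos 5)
Your proof follows the paper's argument exactly: decompose via the uncorrupted PageRank distribution $\pi_\delta$ as an intermediate point, bound the two pieces with \Cref{cor:pr_close} and \Cref{cor:corrupted_close}, and choose $\delta\approx\sqrt{\gamma\beta\varepsilon^{1/q}}$ to balance them. One small correction of attribution: the logarithmic factors hidden in $\widetilde{O}(\cdot)$ come from the corollary bounds themselves (the $\log(1/\varepsilon)$ term in \Cref{cor:corrupted_close} and the $\log(\|\mu/\pi\|_\infty/\delta)$ term in \Cref{cor:pr_close}), not from the cost of approximating $\widetilde{\pi}_\delta$ by power iteration.
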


\Cref{thm:informal_1} is a simple corollary in the important special case under the assumption that all states have stationary measure of order $\Theta(1/n)$, showing that one obtains nontrivial recovery guarantees when when an arbitrary $\varepsilon\lesssim \gamma$ fraction of states are arbitrarily corrupted by simply using PageRank with a uniform restart (see \Cref{cor:spread} for the formal guarantee). However, \Cref{thm:intro_main} allows for significantly milder conditions on the restart distribution that nonetheless yield algorithmic guarantees.

\subsection{Related Work}
\label{sec:related}

Conceptually, \Cref{thm:informal_1} is related to the types of corruptions studied in algorithmic robust statistics (see~\cite{diakonikolas2023algorithmic} for a textbook treatment). In problems like robust mean estimation or supervised learning, it is typically assumed that a small constant fraction of the observed data can be \emph{arbitrarily corrupted}. As in our setting, these forms of corruptions preclude the use of standard, direct algorithms that can be extremely sensitive to large outliers, and thus require new insights. A key distinction is that underlying samples in these problems can be viewed as statistically independent in a very strong sense: if the corrupted samples were somehow known, one could just run a standard algorithm on the inliers. By contrast, the stationary distribution of a Markov chain is an inherently global object that depends intimately on the relationships between the transitions between all pairs of states.

\textbf{Matrix Perturbation Bounds.} As mentioned before, there is a vast literature that investigates the robustness of Markov chains. To obtain an explicit bound on the distance between stationary measures, all works we are aware of employ a \emph{matrix perturbation approach} of the following form: if $\widetilde{P}=\mathsf{P}+\Delta$ for some error matrix $\Delta$ expressing the difference, then one hopes to show that
\begin{equation}
\label{eq:perturbation}
    \|\pi-\widetilde{\pi}\|\leq \kappa(\mathsf{P})\cdot\|\Delta\|,
\end{equation}
for some suitable choices of norm and some measure $\kappa(\cdot)$ that measures the robustness of the chain. Natural choices of norm for the left-hand side include the entrywise or relative maximum difference as well as the total variation distance. Consequently, these works aim to address a weaker question: when do perturbations have a provably benign effect on the \emph{new} stationary distribution?

The central conclusion of these lines of work is that, for restricted classes of corruptions, the distance between the stationary distributions can be determined by ratio of the amount of corruption and some measure of the mixing time. The intuition is that if the original chain mixes faster than it encounters a perturbation, then the corrupted chain will not be largely affected. As a prototypical example that we will also employ as a preliminary estimate (c.f. \Cref{cor:pr_close}), several authors (e.g. Mitrophanov~\cite{mitrophanov2003stability}, Hunter~\cite{HUNTER2005217,hunter2006mixing}, and Vial and Subramanian~\cite{subramanian}) show that 
\begin{equation*}
    d_{\mathsf{TV}}(\pi,\widetilde{\pi})\lesssim \frac{\|\Delta\|}{\gamma},
\end{equation*} 
where $\gamma$ is the spectral gap of the chain and $\|\Delta\|$ is the maximum $\ell_1$ norm of a row in the error matrix. Note however that this norm precisely measures the total corruption allowed at any one site, which is insufficient to capture the more realistic settings we wish to control. 

In principle, the new stationary distribution can be written in terms of probabilistic quantities associated to the original chain and the perturbation like hitting times; see e.g. the classical results of Schweitzer~\cite{schweitzer1968perturbation}, Seneta~\cite{seneta1993sensitivity}, Laserre~\cite{laserre}, and Hunter~\cite{HUNTER2005217} among many others. Variations on this matrix perturbation approach that depend on specific features of the chain or using other metrics have also appeared in abundance, as in~\cite{meyer,thiede, 10.3150/17-BEJ938} among many other works. We defer to the surveys~\cite{CHO2001137,abbas2016critical,rudolf2024perturbations} for a more comprehensive comparison of this vast literature. 

We stress that to obtain any nontrivial recovery under the notion of corruption of \Cref{def:corruption}, \emph{the matrix perturbation approach is inherently insufficient}: the left-hand side of \Cref{eq:perturbation} simply will not generally be small in these settings since the corrupted stationary distribution will be far even for trivial perturbations that, for instance, create absorbing states. The major contribution of our work is showing that more careful algorithms can still succeed in robust recovery in these cases.

\textbf{PageRank.} The PageRank algorithm was introduced by Brin and Page~\cite{brin1998anatomy} as a ``random surfer'' model of the web: at each step, the surfer chooses a random hyperlink at the current stage with probability $(1-\delta)$ (where $\delta$ is sometimes called the ``damping factor") and otherwise randomly restarts at a random site sampled according to some fixed \emph{restart distribution} $\mu$. A typical choice would just be a uniform site, and this alteration ensures irreducibility (and so convergence) of the chain. In our work, we view PageRank as a \emph{regularizer} that can be used for algorithmic recovery under adversarial corruptions under mild conditions on the restart.

Various works have established a connection between the mixing properties of the chain and PageRank. Caputo and Quattropani~\cite{caputo} study the behavior of PageRank in the setting of the random digraph model and establish a fascinating trichotomy behavior on the total variation distribution between the original and PageRank stationary distribution depending on the scaling between the restart probability and mixing time of the Markov chain. These ideas were studied more generally by Vial and Subramanian~\cite{subramanian}, who establish a similar trichotomy under \emph{cutoff}. Their work gives upper bounds on the total variation distance of a Markov chain and a corrupted chain that may perturb each transition by $\alpha$ in total variation when the ratio between the mixing time and $\alpha$ approach $c\in [0,\infty]$. In the case that $c=0$, they show that the total variation distance in fact tends to $0$. They also show that under reversibility and cutoff, these bounds are asymptotically sharp and attained precisely by suitable PageRank perturbations. These works provide elementary, but powerful tools analyzing the tradeoff between mixing and robustness and we rely on key elements of their analysis to treat the more general class of corruptions allowed by \Cref{def:corruption}.

\textbf{Corrupted Dynamics.} The setting where each state has $\varepsilon$-corrupted transitions can be motivated from the perspective of robust sampling. The recent work of Chin, Mossel, Moitra, and Sandon~\cite{adversary} explores the power of a adversary that controls a $\varepsilon$-subset of \emph{sites} in spin systems on graphs. This setting can essentially be cast in the corruption model we consider with the extra restriction that the same set of sites is corrupted at each state of the spin system. However, their main focus is understanding how such an adversary can \emph{efficiently} affect important macroscopic properties of the chain rather than the more stringent total variation distance. We later discuss future directions aimed at unifying the results here with the kinds of phenomena considered for that class of dynamics. Recent work of Lin, Liu, and Smith~\cite{lin2025perturbation} shows that under certain technical conditions, sampling from graphical models can be robust to larger perturbations than those suggested by the mixing time heuristic above. Craiu, et al.~\cite{craiu} investigate the tightness of Markov chains when arbitrary adversarial behavior may occur, in a time-dependent way, on a bounded subset of states; note that this question is typically only meaningful in infinite state spaces.

\textbf{Acknowledgments.} This research was conducted while J.G. was at the Department of Mathematics at MIT supported by Vannevar Bush Faculty Fellowship ONR-N00014-20-1-2826 and Simons Investigator Award 622132. E.M. is supported in part by Vannevar Bush Faculty Fellowship ONR-N00014-20-1-2826, Simons Investigator Award 622132, Simons-NSF DMS-2031883, NSF Award CCF 1918421, and ONR MURI Grant N000142412742.

\section{Preliminaries}
\label{sec:prelims}
In this section, we recall the following standard definitions and results on Markov chains. Recall that a matrix $\mathsf{P}\in \mathbb{R}^{n\times n}$ is a Markov chain on $n$ states if the entries are nonnegative and the rows sum to one. The entry $\mathsf{P}_{ij}$ denotes the probability of transitioning to state $j$ when at state $i$. We will write distributions on $[n]$ as a row vector. A distribution $\pi$ is \emph{stationary} with respect to $\mathsf{P}$ if $\pi \mathsf{P}=\pi$; under standard irreducibility and aperiodicity conditions (which will always hold under our later assumptions), it is well-known that $\lim_{t\to \infty} \bm{p}\mathsf{P}^t=\pi$ for any starting distribution $\bm{p}$. For a subset $S\subseteq [n]$ we will write $\pi(S)$ to denote $\sum_{s\in S} \pi(s)$.

If $\pi$ is a distribution on $\mathcal{X}$, then we may consider the Hilbert space $L^2(\mathcal{X},\pi)$ where for any functions $f,g:\mathcal{X}\to \mathbb{R}$, we define the associated inner product
\begin{equation*}
    \langle f,g\rangle_{\pi}=\mathbb{E}_{X\sim \pi}[f(X)g(X)] = \sum_{x\in \mathcal{X}}\pi(x)f(x)g(x).
\end{equation*}
We also have the corresponding $L^p$ spaces defined in the usual way with
\begin{equation*}
\|f\|_{p,\pi}=\left(\mathbb{E}_{X\sim \pi}[\vert f(X)\vert^p]\right)^{1/p}.
\end{equation*}
When we omit $\pi$ in the subscripts, we mean the usual inner product and $\ell_p$ norms of vectors. 

The corresponding induced matrix norm $\|\mathsf{M}\|_{p\to p}$ is defined as $\sup_{f:\|f\|_{p,\pi}\leq 1} \|\mathsf{M}f\|_p$. The following contractive property of Markov operators is well-known:
\begin{fact}
\label{fact:contract}
    $\pi$ is the stationary distribution of $\mathsf{P}$, then $\|\mathsf{P}\|_{p\to p}=1$ for all $p\geq 1$.
\end{fact}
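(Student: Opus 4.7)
The plan is to prove the two-sided bound $\|\mathsf{P}\|_{p\to p} \leq 1$ and $\|\mathsf{P}\|_{p\to p} \geq 1$ separately, using Jensen's inequality in tandem with stationarity for the upper bound, and the constant function as a witness for the lower bound.

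For the upper bound, I would view the Markov operator acting on a test function $f:\mathcal{X}\to\mathbb{R}$ pointwise as a conditional expectation, namely $(\mathsf{P}f)(x)=\sum_{y}\mathsf{P}(x,y)f(y)$, which is the expectation of $f$ under the probability measure $\mathsf{P}(x,\cdot)$. By Jensen's inequality applied to the convex function $t\mapsto |t|^p$ for $p\geq 1$,
\begin{equation*}
    |(\mathsf{P}f)(x)|^p \;\leq\; \sum_{y} \mathsf{P}(x,y)\,|f(y)|^p.
\end{equation*}
Integrating against $\pi$ and invoking stationarity via the identity $\sum_{x}\pi(x)\mathsf{P}(x,y)=\pi(y)$, I would swap the order of summation:
\begin{equation*}
    \|\mathsf{P}f\|_{p,\pi}^p \;\leq\; \sum_{x}\pi(x)\sum_{y}\mathsf{P}(x,y)|f(y)|^p \;=\; \sum_{y}|f(y)|^p \sum_{x}\pi(x)\mathsf{P}(x,y) \;=\; \sum_{y}\pi(y)|f(y)|^p \;=\; \|f\|_{p,\pi}^p.
\end{equation*}
Taking $p$-th roots and supremizing over $f$ with $\|f\|_{p,\pi}\leq 1$ yields $\|\mathsf{P}\|_{p\to p}\leq 1$.

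For the matching lower bound, I would take $f\equiv \bm{1}$, the constant function equal to $1$. Since the rows of $\mathsf{P}$ sum to one, $\mathsf{P}\bm{1}=\bm{1}$, and both $\|\mathsf{P}\bm{1}\|_{p,\pi}$ and $\|\bm{1}\|_{p,\pi}$ equal $1$ since $\pi$ is a probability measure. This exhibits a unit-norm function whose image has $p$-norm $1$, so $\|\mathsf{P}\|_{p\to p}\geq 1$.

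I don't expect any real obstacles here: the essential content is that Markov operators are averaging operators, convexity costs nothing in norm by Jensen, and stationarity of $\pi$ exactly compensates for the change of variable when integrating over the Markov step. The only subtlety worth flagging is that the argument requires the stationarity identity to interchange the sums, which is precisely the hypothesis. The two endpoint cases $p=1$ and $p=\infty$ can also be checked directly without Jensen if desired (by the triangle inequality and monotonicity of averages respectively), but the unified Jensen proof covers all $p\geq 1$.
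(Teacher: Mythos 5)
Your proof is correct and uses essentially the same approach as the paper: Jensen's inequality for the convex map $t\mapsto |t|^p$ combined with stationarity of $\pi$ to get contractivity. You are slightly more thorough than the paper, which only explicitly establishes the upper bound $\|\mathsf{P}\|_{p\to p}\leq 1$ and leaves the matching lower bound (via the constant function $\bm{1}$) implicit.
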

\begin{proof}
We may simply compute:
\begin{align*}
    \|\mathsf{P}f\|_{p,\pi} &= \left(\mathbb{E}_{X\sim\pi}[\vert \mathsf{P}f(X)\vert^p]\right)^{1/p}\\
    &=\left(\mathbb{E}_{X\sim\pi}[\vert \mathbb{E}_{X'\sim \mathsf{P}(X,\cdot)}[ f(X')]\vert^p\right)^{1/p}\\
    &\leq \left(\mathbb{E}_{X'\sim \pi}[\vert f(X')\vert^p]\right)^{1/p}=\|f\|_{p,\pi},
\end{align*}
by Jensen's inequality and using the fact $X'$ is also distributed as $\pi$ by stationarity.
\end{proof}

Our main goal is to recover the stationary distribution with small error in \emph{total variation distance}, which is defined between a distribution $\bm{p}$ and a distribution $\pi$ via
\begin{equation*}
    d_{\mathsf{TV}}(\bm{p},\pi):=\frac{1}{2}\|\bm{p}-\pi\|_1=\frac{1}{2}\left\|\bm{1}-\frac{\bm{p}}{\pi}\right\|_{1,\pi},
\end{equation*}
where $\bm{1}$ denotes the all-ones row vector and the ratio is considered pointwise under the convention that $0\cdot (a/0) = a$ for $a\geq 0$ in this expression. 

For our algorithmic guarantees, we will require the following definition quantifying the extent to which a warm start is not too spiked with respect to the unknown $\pi$:
\begin{defn}[Smooth Distributions]
    Let $\pi$ be a distribution on a state space $\mathcal{X}$. A distribution $\mu$ is $(\beta,p)$-smooth with respect to $\pi$ if 
    \begin{equation*}
        \left\|\frac{\mu}{\pi}\right\|_{p,\pi}\leq \beta.
    \end{equation*}
\end{defn}
In words, the $(\beta,p)$-smoothness of a distribution $\mu$ with respect to $\pi$ is a measure of how much $\mu$ overestimates $\pi$. Note that for any $p\leq p'$, $(\beta,p')$-smoothness implies $(\beta,p)$-smoothness by Jensen's inequality. The case where $p=\infty$ amounts to $\mu$ providing relative estimates for all states, while $p=1$ is trivially always satisfied with $\beta=1$. Note though that even for $p=\infty$, the total variation distance between $\pi$ and a $(\beta,p)$-smooth distribution may be quite close to $1$. We will nonetheless provide algorithmic guarantees under any fixed choice of $p>1$.

\begin{defn}[Adjoints]
    Suppose $\pi$ is a distribution on $\mathcal{X}$ that is positive. For any matrix $\mathsf{M}\in \mathbb{R}^{\mathcal{X}\times \mathcal{X}}$, the \textbf{adjoint} with respect to $\pi$ is the matrix $\mathsf{M}^*$ defined via
    \begin{equation*}
        \mathsf{M}^*(x,y) = \frac{\pi(y)\mathsf{M}(y,x)}{\pi(x)}
    \end{equation*}
\end{defn}

It is a standard fact that if $\pi$ is the unique stationary distribution for a Markov transition matrix $\mathsf{P}$, then the adjoint $\mathsf{P}^*$ is also a Markov transition matrix whose stationary distribution is $\pi$ and therefore also satisfies the conclusion of \Cref{fact:contract}. Indeed,
\begin{equation*}
    \sum_{y} \mathsf{M}^*(x,y) = \frac{1}{\pi(x)}\sum_{y} \pi(y)\mathsf{M}(y,x) = \frac{\pi(x)}{\pi(x)} = 1,
\end{equation*}
by the stationarity of $\pi$ for $\mathsf{M}$, and also
\begin{equation*}
    \sum_{x}\pi(x)\mathsf{M}^*(x,y) = \sum_{x} \pi(y)M(y,x) = \pi(y).
\end{equation*}

The benefit of the adjoint is that it is well-known to characterize the effect of transitioning according to the chain:
\begin{fact}
\label{fact:adj_dist}
    For any distribution $\nu$ on $\mathcal{X}$, define the column vector $f=(\frac{\mu}{\pi}-\bm{1})^T$. Then $\mathbb{E}_{\pi}[f]=0$ and 
    \begin{equation*}
        \mathsf{P}^*f = \left(\frac{\mu\mathsf{P}}{\pi}-1\right)^T.
    \end{equation*}
\end{fact}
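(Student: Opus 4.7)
The plan is to verify both claims by direct computation from the definitions; no deeper tool is required, since the content of the statement is essentially an algebraic translation that the transition operation on densities corresponds to the action of the adjoint on functions.

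For the first assertion, I would simply expand
\begin{equation*}
    \mathbb{E}_{\pi}[f] = \sum_{x\in\mathcal{X}} \pi(x)\left(\frac{\mu(x)}{\pi(x)} - 1\right) = \sum_{x}\mu(x) - \sum_{x}\pi(x) = 1 - 1 = 0,
\end{equation*}
using only that $\mu$ and $\pi$ are probability distributions. (The convention $0\cdot(a/0)=a$ already introduced in the total variation display makes this well-defined if $\pi$ has zeros, but under the positivity hypothesis stated for the adjoint this is moot.)

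For the second assertion, I would compute $(\mathsf{P}^*f)(x)$ coordinatewise by plugging in the definition of the adjoint:
\begin{equation*}
    (\mathsf{P}^*f)(x) = \sum_{y}\mathsf{P}^*(x,y)f(y) = \sum_{y}\frac{\pi(y)\mathsf{P}(y,x)}{\pi(x)}\left(\frac{\mu(y)}{\pi(y)} - 1\right).
\end{equation*}
Splitting the bracket and pulling the $1/\pi(x)$ through gives two sums: the first is $\frac{1}{\pi(x)}\sum_{y}\mu(y)\mathsf{P}(y,x) = \frac{(\mu\mathsf{P})(x)}{\pi(x)}$, which is exactly the density-transition formula; the second is $\frac{1}{\pi(x)}\sum_{y}\pi(y)\mathsf{P}(y,x) = \frac{\pi(x)}{\pi(x)} = 1$, using the stationarity of $\pi$ under $\mathsf{P}$ (which was already verified in the discussion preceding the statement). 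Assembling these two pieces yields exactly $(\mathsf{P}^*f)(x) = (\mu\mathsf{P})(x)/\pi(x) - 1$, which is the desired identity as column vectors.

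There is no genuine obstacle here: the only thing to be careful about is the row/column vector convention, namely that $\mu\mathsf{P}$ is a row vector representing the pushforward of $\mu$ under one step of the chain, while $f$ and $\mathsf{P}^*f$ are column vectors of functions. Once this is kept straight, both claims are a one-line unpacking of the definitions of $\mathsf{P}^*$ and of stationarity. (I would also note that there appears to be a minor typo in the statement — the symbol $\nu$ is introduced but then $\mu$ is used in the display — and in writing the proof I would simply work consistently with $\mu$.)
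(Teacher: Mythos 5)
Your proof is correct and follows essentially the same direct computation as the paper: both parts expand the definitions of $\mathbb{E}_\pi$ and of the adjoint $\mathsf{P}^*$, split the bracket, and invoke the stationarity identity $\sum_y \pi(y)\mathsf{P}(y,x)=\pi(x)$ to reduce the second sum to $1$. You are also right that the statement has a minor typo (introducing $\nu$ but then using $\mu$); the paper's own proof likewise works throughout with $\mu$.
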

\begin{proof}
    The first identity holds because $\mu$ is a distribution:
    \begin{equation*}
        \mathbb{E}_{\pi}[f(X)] = \sum_{x}\pi(x)\left(\frac{\mu(x)}{\pi(x)}-1\right)=\sum_{x}[\mu(x)-\pi(x)] = 0.
    \end{equation*}

    The second identity is standard and follows from a quick calculation:
    \begin{align*}
        \mathsf{P}^*f(x) &= \sum_{y}\mathsf{P}^*(x,y)f(y)\\
        &=\sum_{y}\frac{\pi(y)\mathsf{P}(y,x)}{\pi(x)}\left(\frac{\mu(y)}{\pi(y)}-1\right)\\
        &=\sum_{y}\left(\frac{\mu(y)\mathsf{P}(y,x)}{\pi(x)}\right) -1\\
        &=\frac{\mu \mathsf{P}}{\pi}(x) - 1.
    \end{align*}
    
\end{proof}

As discussed above, and as we will see in simple examples later, the ability to recover the stationary distribution is connected to the \emph{spectral gap} of the chain:
\begin{defn}[Spectral Gap]
    Let $\mathcal{X}$ be a finite state space and let $\mathsf{P}$ be a Markov transition matrix on $\mathcal{X}$. Then the \textbf{spectral gap} of $\mathsf{P}$ is defined as 
    \begin{equation*}
        \gamma(\mathsf{P})=1-\sup_{f\in L^2(\pi): \mathbb{E}[f]=0}\frac{\|\mathsf{P}f\|_{2,\pi}}{\|f\|_{2,\pi}}.
    \end{equation*}
\end{defn}

It is well-known that the spectral gap governs the convergence stationarity. We will use the following result:

\begin{lem}
\label{lem:mixing}
    Suppose that $\pi$ is stationary for $\mathsf{P}$ and that $\mathsf{P}$ has spectral gap at least $\gamma$. Then for any distribution $\mu$ and any $t\in \mathbb{N}$, it holds that
    \begin{equation*}
        \|\pi-\mu\mathsf{P}^t\|_1\leq (1-\gamma)^t\sqrt{2\|\mu/\pi\|_{\infty}}.
    \end{equation*}
\end{lem}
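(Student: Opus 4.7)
The plan is to reduce the total variation bound to an $L^2(\pi)$ contraction on mean-zero functions via the adjoint operator, and then apply the spectral gap iteratively. First, using the alternate expression
\begin{equation*}
\|\pi - \mu\mathsf{P}^t\|_1 = \left\|\bm{1} - \frac{\mu\mathsf{P}^t}{\pi}\right\|_{1,\pi},
\end{equation*}
and defining $f = (\mu/\pi - \bm{1})^T$ as in \Cref{fact:adj_dist}, iterating that fact yields $(\mathsf{P}^*)^t f = (\mu\mathsf{P}^t/\pi - \bm{1})^T$. Hence the $\ell_1$ distance we want to bound equals $\|(\mathsf{P}^*)^t f\|_{1,\pi}$, and by Jensen's (or Cauchy--Schwarz) this is at most $\|(\mathsf{P}^*)^t f\|_{2,\pi}$.

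Next I would transfer the spectral gap of $\mathsf{P}$ to its adjoint $\mathsf{P}^*$. Since $\mathsf{P}^*$ is the Hilbert-space adjoint of $\mathsf{P}$ on $L^2(\mathcal{X},\pi)$, the operator norms of $\mathsf{P}$ and $\mathsf{P}^*$ coincide. Moreover $\mathsf{P}^*$ preserves the subspace of mean-zero functions because $\mathbb{E}_{\pi}[\mathsf{P}^* g] = \langle \bm{1}, \mathsf{P}^* g\rangle_\pi = \langle \mathsf{P}\bm{1}, g\rangle_\pi = \mathbb{E}_\pi[g]$ (using $\mathsf{P}\bm{1}=\bm{1}$). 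Hence for mean-zero $f$ (and note $\mathbb{E}_\pi[f]=0$ by \Cref{fact:adj_dist}), we get the contraction $\|(\mathsf{P}^*)^t f\|_{2,\pi} \leq (1-\gamma)^t \|f\|_{2,\pi}$.

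The last step is a direct computation to bound $\|f\|_{2,\pi}$. Expanding,
\begin{equation*}
\|f\|_{2,\pi}^2 = \sum_{x} \pi(x)\left(\frac{\mu(x)}{\pi(x)} - 1\right)^2 = \sum_x \frac{\mu(x)^2}{\pi(x)} - 1 \leq \left\|\frac{\mu}{\pi}\right\|_\infty \sum_x \mu(x) = \left\|\frac{\mu}{\pi}\right\|_\infty,
\end{equation*}
which is at most $2\|\mu/\pi\|_\infty$. Combining these three steps yields the claimed bound.

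No step looks genuinely hard; the only mild subtlety is invoking the spectral gap for $\mathsf{P}^*$ rather than $\mathsf{P}$, which I would justify in one line via the Hilbert-space adjoint identity, together with the mean-zero-preserving property above. Everything else is Jensen, \Cref{fact:adj_dist}, and an elementary chi-squared calculation.
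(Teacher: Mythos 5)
Your proposal is correct and follows essentially the same route as the paper: rewrite $\|\pi - \mu\mathsf{P}^t\|_1$ as $\|\bm{1} - \mu\mathsf{P}^t/\pi\|_{1,\pi}$, pass to $L^2(\pi)$ via Cauchy--Schwarz, apply the spectral gap contraction of the adjoint $\mathsf{P}^*$ on mean-zero functions for $t$ steps, and then bound $\|\bm{1}-\mu/\pi\|_{2,\pi}$. The only cosmetic difference is the final step: the paper interpolates between the $L^1$ distance (at most $2$) and the $L^\infty$ distance, while you compute the chi-squared divergence $\sum_x \mu(x)^2/\pi(x) - 1 \leq \|\mu/\pi\|_\infty$ directly, which actually gives the slightly sharper constant $\sqrt{\|\mu/\pi\|_\infty}$ in place of $\sqrt{2\|\mu/\pi\|_\infty}$.
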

\begin{proof}
    From the proof of Proposition 1.2. of~\cite{montenegro2006mathematical}, it holds from the Cauchy-Schwarz inequality that
    \begin{align*}
        \|\pi-\mu \mathsf{P}^t\|_1&=\left\|\bm{1}-\frac{\mu\mathsf{P}^t}{\pi}\right\|_{1,\pi}\\
        &\leq \left\|\bm{1}-\frac{\mu\mathsf{P}^t}{\pi}\right\|_{2,\pi}\\
        &\leq (1-\gamma(\mathsf{P}^*))^t \left\|1-\frac{\mu}{\pi}\right\|_{2,\pi}\\
        &\leq (1-\gamma(\mathsf{P}^*))^t \sqrt{\left(\sum_{x}\pi(x)\left\vert \frac{\mu}{\pi}(x)-1\right\vert\right)\max_{x}\left\vert \frac{\mu}{\pi}(x)-1\right\vert}\\    
        &\leq (1-\gamma(\mathsf{P}^*))^t\sqrt{2\|\mu/\pi\|_{\infty}},
    \end{align*}
    where in the last line we use the fact that the $\ell_1$ distance between two distributions is at most $2$. It is well-known that if $\pi$ is stationary for $\mathsf{P}$, then $\gamma(\mathsf{P}^*)=\gamma(\mathsf{P})$, see e.g. Remark 1.19 of Montenegro-Tetali.
\end{proof}

\section{Algorithmic Recovery via PageRank}

In this section, we turn to our main algorithmic results. We first require the following simple bound, which appears essentially as Lemma 1 of~\cite{subramanian}, and quantifies the basic tension between fast mixing and (lack of) approximate stationarity of a distribution with respect to a transition matrix:
\begin{lem}
\label{lem:coupling_main}
    Let $\mathsf{P}$ be any Markov chain with a stationary distribution $\pi$. Then for any distribution $\nu$ and any $t\in \mathbb{N}$ it holds that
    \begin{equation*}
        \|\pi-\nu\|_1\leq \|\pi -\nu\mathsf{P}^t\|_1+t\|\nu-\nu\mathsf{P}\|_1.
    \end{equation*}
\end{lem}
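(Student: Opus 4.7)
The plan is to reduce the bound to two elementary ingredients: the triangle inequality and the $\ell_1$-contractivity of any Markov operator acting on signed measures. First, I would apply the triangle inequality in the natural way by inserting $\nu\mathsf{P}^t$ as an intermediate point, which gives
\begin{equation*}
\|\pi-\nu\|_1\leq \|\pi-\nu\mathsf{P}^t\|_1+\|\nu\mathsf{P}^t-\nu\|_1.
\end{equation*}
The first term on the right is already the first term in the desired bound, so the remaining task is to show that $\|\nu\mathsf{P}^t-\nu\|_1\leq t\|\nu\mathsf{P}-\nu\|_1$.

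For this, I would use the standard telescoping identity
\begin{equation*}
\nu\mathsf{P}^t-\nu = \sum_{k=0}^{t-1}\bigl(\nu\mathsf{P}^{k+1}-\nu\mathsf{P}^k\bigr) = \sum_{k=0}^{t-1}(\nu\mathsf{P}-\nu)\mathsf{P}^k,
\end{equation*}
and then apply the triangle inequality across the sum. For each individual term, I would invoke the fact that a Markov transition matrix is a contraction on signed measures in the $\ell_1$ norm: for any signed row vector $\sigma$,
\begin{equation*}
\|\sigma\mathsf{P}\|_1 = \sum_{y}\Bigl|\sum_{x}\sigma(x)\mathsf{P}(x,y)\Bigr|\leq \sum_{x}|\sigma(x)|\sum_{y}\mathsf{P}(x,y)=\|\sigma\|_1,
\end{equation*}
since $\mathsf{P}$ is row-stochastic. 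Iterating this observation $k$ times yields $\|(\nu\mathsf{P}-\nu)\mathsf{P}^k\|_1\leq \|\nu\mathsf{P}-\nu\|_1$, so summing over $k$ produces the claimed factor of $t$.

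There is not really a hard step here; the whole argument is a triangle inequality plus a telescoping decomposition, and the only subtlety worth flagging is that the contractivity must be applied to the \emph{signed} measure $\nu\mathsf{P}-\nu$ (rather than to a probability distribution), which is why the elementary direct calculation above is preferable to appealing to \Cref{fact:contract} (which is stated for $L^p(\pi)$ with $\pi$ stationary). Combining the two displays completes the proof.
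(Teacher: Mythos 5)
Your argument is correct and matches the paper's proof essentially step for step: insert $\nu\mathsf{P}^t$ via the triangle inequality, telescope $\nu\mathsf{P}^t-\nu$ into $t$ increments, and bound each increment by $\ell_1$-contractivity of the Markov operator on signed measures. Your explicit verification of that contractivity (rather than citing \Cref{fact:contract}, which is stated for $L^p(\pi)$ acting on functions with $\pi$ stationary) is a sensible clarification of a detail the paper leaves implicit.
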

\begin{proof}
    By the triangle inequality,
    \begin{equation*}
        \|\pi-\nu\|_1\leq \|\pi -\nu\mathsf{P}^t\|_1+\|\nu-\nu\mathsf{P}^t\|_1,
    \end{equation*}
    and hence it suffices to bound the last term. To do so, we simply telescope:
    \begin{equation*}
\|\nu-\nu\mathsf{P}^t\|_1 \leq \sum_{\ell=1}^t \|\nu\mathsf{P}^{\ell-1}-\nu \mathsf{P}^{\ell}\|_1=\sum_{\ell=1}^t \|(\nu-\nu \mathsf{P})\mathsf{P}^{\ell-1}\|_1. 
    \end{equation*}
    Since $\mathsf{P}$ is a Markov kernel, it is a contraction in total variation distance and hence each term in the sum can be bounded by the first term. This proves the claim.
\end{proof}

Next, we use this result to provide a fairly well-known bound on the distance between an uncorrupted stationary distribution and the PageRank version; this result essentially appears as Theorem 1(a) of ~\cite{subramanian} as well as Theorem 2.1 of~\cite{mitrophanov2003stability}, just more explicitly quantifying the dependence on the restart distribution relative to $\pi$.
\begin{cor}
\label{cor:pr_close}
Suppose that a Markov chain $\mathsf{P}$ has spectral gap at least $\gamma$ with stationary distribution $\pi$. For any $\delta\in [0,1]$ and distribution $\mu$, consider the PageRank chain $\mathsf{P}(\delta) = (1-\delta)\mathsf{P}+\delta\bm{1}^T\mu$ and let $\pi_{\delta}$ denote the stationary distribution. Then 
\begin{equation*}
    \|\pi-\pi_{\delta}\|_1\leq \frac{C\delta\log\left(\frac{\gamma\|\mu/\pi\|_{\infty}}{\delta}\right)}{\gamma},
\end{equation*}
for an absolute constant $C>0$.
\end{cor}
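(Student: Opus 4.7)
The natural approach is to apply the coupling inequality of \Cref{lem:coupling_main} with $\nu = \pi_\delta$ and to balance the two resulting terms in $t$. This gives
\begin{equation*}
\|\pi - \pi_\delta\|_1 \leq \|\pi - \pi_\delta \mathsf{P}^t\|_1 + t\|\pi_\delta - \pi_\delta \mathsf{P}\|_1,
\end{equation*}
so the task reduces to controlling each piece.

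The second term is easy and gives the factor of $\delta$. Using the stationarity identity $\pi_\delta = (1-\delta)\pi_\delta\mathsf{P} + \delta\mu$, I would solve for $\pi_\delta\mathsf{P} = (\pi_\delta - \delta\mu)/(1-\delta)$ and conclude
\begin{equation*}
\pi_\delta - \pi_\delta\mathsf{P} = \frac{\delta}{1-\delta}(\mu - \pi_\delta),
\end{equation*}
so $\|\pi_\delta - \pi_\delta\mathsf{P}\|_1 \leq 2\delta/(1-\delta)$.

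For the first term I would apply \Cref{lem:mixing} with starting distribution $\pi_\delta$ to obtain
\begin{equation*}
\|\pi - \pi_\delta\mathsf{P}^t\|_1 \leq (1-\gamma)^t \sqrt{2\|\pi_\delta/\pi\|_\infty}.
\end{equation*}
The step I expect to do the real work is bounding $\|\pi_\delta/\pi\|_\infty$ by $\|\mu/\pi\|_\infty$. The cleanest argument is monotonicity: iterating the PageRank fixed-point equation gives the series representation $\pi_\delta = \delta\sum_{k\geq 0}(1-\delta)^k \mu\mathsf{P}^k$, and if $\mu(x) \leq M\pi(x)$ pointwise with $M = \|\mu/\pi\|_\infty$, then $\mu\mathsf{P}^k \leq M\pi\mathsf{P}^k = M\pi$ since the entries of $\mathsf{P}^k$ are nonnegative and $\pi$ is $\mathsf{P}$-stationary. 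Summing the geometric series yields $\pi_\delta \leq M\pi$, i.e., $\|\pi_\delta/\pi\|_\infty \leq \|\mu/\pi\|_\infty$.

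Combining,
\begin{equation*}
\|\pi - \pi_\delta\|_1 \leq e^{-\gamma t}\sqrt{2\|\mu/\pi\|_\infty} + \frac{2t\delta}{1-\delta}.
\end{equation*}
Finally I would optimize over $t$, choosing $t$ of order $\gamma^{-1}\log(\gamma\|\mu/\pi\|_\infty/\delta)$, which renders the first term $O(\delta/\gamma)$ and the second term $O((\delta/\gamma)\log(\gamma\|\mu/\pi\|_\infty/\delta))$, yielding the claimed bound up to an absolute constant. The only subtlety is ensuring the chosen $t$ is a positive integer and the log argument is at least $1$, which can be handled by an additive constant absorbed into $C$ (and trivially if $\delta \geq \gamma\|\mu/\pi\|_\infty$ the bound is vacuous since $\|\pi-\pi_\delta\|_1 \leq 2$).
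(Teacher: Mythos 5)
Your proof is correct and follows the same overall strategy as the paper: apply \Cref{lem:coupling_main} with $\nu=\pi_\delta$, bound the drift term $\|\pi_\delta-\pi_\delta\mathsf{P}\|_1$ by $O(\delta)$ via the PageRank fixed-point equation, bound the mixing term via \Cref{lem:mixing}, and then optimize $t$.

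The one place where you genuinely diverge is in establishing $\|\pi_\delta/\pi\|_\infty\leq\|\mu/\pi\|_\infty$. The paper routes this through the adjoint: it writes $f=(\mu/\pi)^T$, applies \Cref{fact:adj_dist} to identify $(\mathsf{P}^*)^kf=(\mu\mathsf{P}^k/\pi)^T$, and then invokes the $\ell_\infty$-contractivity of $\mathsf{P}^*$ from \Cref{fact:contract} before summing the von Neumann series. You instead argue directly from positivity: $\mu\leq M\pi$ entrywise implies $\mu\mathsf{P}^k\leq M\pi\mathsf{P}^k=M\pi$ since $\mathsf{P}^k$ has nonnegative entries, and then the geometric averaging preserves the bound. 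Your version is more elementary and self-contained; the paper's version is phrased so that the exact same argument transfers verbatim to $L^p(\pi)$ bounds (which it reuses in \Cref{cor:corrupted_close} for general $p$), so the extra machinery is earning its keep later. Your algebra for the drift term also differs trivially (you isolate $\pi_\delta\mathsf{P}$ and get $\frac{\delta}{1-\delta}(\mu-\pi_\delta)$, giving $2\delta/(1-\delta)$, versus the paper's $\delta(\mu-\pi_\delta\mathsf{P})$ giving $2\delta$), but these are the same identity and both give $O(\delta)$.

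A small caveat you flag correctly: the choice of $t$ needs to be a positive integer and the argument of the logarithm needs to exceed $1$; the paper is silent on this, and strictly speaking the stated bound can have a negative right-hand side when $\delta>\gamma\|\mu/\pi\|_\infty$ (this is a wart in the corollary's statement, not in either proof). Handling it by absorbing constants and observing the bound is trivially true whenever the log argument is $O(1)$ (since $\|\pi-\pi_\delta\|_1\leq 2$) is exactly the right fix.
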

\begin{proof}
    By applying \Cref{lem:coupling_main} and then \Cref{lem:mixing} for the first term, we may first write
    \begin{equation*}
        \|\pi-\pi_{\delta}\|_1\leq \sqrt{2\|\pi_{\delta}/\pi\|_{\infty}}\cdot(1-\gamma)^{t}+t\|\pi_{\delta}-\pi_{\delta}\mathsf{P}\|_1\leq \sqrt{2\|\pi_{\delta}/\pi\|_{\infty}}\exp(-t\gamma)+t\|\pi_{\delta}-\pi_{\delta}\mathsf{P}\|_1.
    \end{equation*}

    For the second term, the stationarity of $\pi_{\delta}$ with respect to $\mathsf{P}(\delta)$ implies that
    \begin{align*}
        \|\pi_{\delta}-\pi_{\delta}\mathsf{P}\|_1&=\|\pi_{\delta}\mathsf{P}(\delta)-\pi_{\delta}\mathsf{P}\|_1\\
        &=\delta\|\pi_{\delta}(\mathsf{P}-\bm{1}^T\mu)\|_1\\
        &\leq 2\delta.
    \end{align*}

    Setting $t=\frac{C\log\left(\gamma\frac{\|\pi_{\delta}/\pi\|_{\infty}}{\delta}\right)}{\gamma}$ for a large enough constant $C>0$ yields the bound
    \begin{equation*}
        \|\pi-\pi_{\delta}\|_1\leq  \frac{C'\delta\log\left(\frac{\gamma\|\pi_{\delta}/\pi\|_{\infty}}{\delta}\right)}{\gamma},
    \end{equation*}
    for some slightly larger constant $C'$. To replace the final bound to be in terms of $\mu$, we can easily argue as follows: let $f=(\frac{\mu}{\pi})^T$ be the relative density as a column vector. Then by \Cref{fact:adj_dist}, 
    \begin{equation*}
        (\mathsf{P}^*)^kf = \left(\frac{\mu \mathsf{P}^k}{\pi}\right)^T.
    \end{equation*}
    Since $\mathsf{P}^*$ is also a Markov kernel, it is a contraction in $\ell_{\infty}$ by \Cref{fact:contract}, and hence 
    \begin{equation*}
        \sup_k \left\|\frac{\mu \mathsf{P}^k}{\pi}\right\|_{\infty}=\sup_k \|(\mathsf{P}^*)^kf\|_{\infty} = \|f\|_{\infty}.
    \end{equation*}
    On the other hand, observe that $\mathsf{P}\bm{1}^T=\bm{1}^T$ since $\mathsf{P}$ is row-stochastic. As a result, since $\pi_{\delta}$ satisfies the equation
    \begin{equation*}
        \pi_{\delta} = \pi_{\delta}(\delta \bm{1}^T\mu+(1-\delta)\mathsf{P})=\delta\mu+(1-\delta)\pi_{\delta}\mathsf{P},
    \end{equation*}
    or equivalently $\pi_{\delta} = \delta\mu(\mathsf{I}-(1-\delta)\mathsf{P})^{-1}$, the von Neumann expansion yields
    \begin{equation*}
        \pi_{\delta} = \delta\sum_{t=0}^{\infty}(1-\delta)^t\mu\mathsf{P}^t.
    \end{equation*}
    Since this is a convex combination, it follows that
    \begin{equation}
    \label{eq:contract}
        \left\|\frac{\pi_{\delta}}{\pi}\right\|_{\infty}\leq \sup_k \left\|\frac{\mu \mathsf{P}^k}{\pi}\right\|_{\infty}=\left\|\frac{\mu}{\pi}\right\|_{\infty}
    \end{equation}
    as claimed.
\end{proof}

We can now provide our main algorithmic observation: we can employ \Cref{lem:coupling_main} \emph{once again} with respect to the corrupted PageRank matrix. Indeed, \emph{any} PageRank matrix inherits favorable spectral properties that improve with $\delta$:
\begin{cor}
\label{cor:corrupted_close}
    Let $\mathsf{P}$ be a Markov chain with stationary distribution $\pi$ and suppose that $\widetilde{\mathsf{P}}$ is a $\varepsilon$-corruption. For any $\delta>0$ and distribution $\mu$, let $\mathsf{P}(\delta)$ and $\widetilde{\mathsf{P}}(\delta)$ denote the PageRank version with restart $\mu$ with stationary distribution $\pi_{\delta}$ and $\widetilde{\pi}_{\delta}$ as before. Then if $\mu$  is $(\beta,p)$-smooth with respect to $\pi$, 

    \begin{equation*}
        \|\pi_{\delta}-\widetilde{\pi}_{\delta}\|_1\leq \frac{C\varepsilon^{1/q}\log(1/\varepsilon)\beta}{\delta}. 
    \end{equation*}
    for an absolute constant $C>0$, where $q$ is the dual exponent of $p$.

\end{cor}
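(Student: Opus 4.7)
The plan is to mirror the proof of \Cref{cor:pr_close}, but applied to the corrupted PageRank kernel $\widetilde{\mathsf{P}}(\delta)$ with $\pi_\delta$ playing the role of the approximately-stationary distribution. Concretely, I would apply \Cref{lem:coupling_main} to get, for any $t \in \mathbb{N}$,
\begin{equation*}
\|\widetilde{\pi}_\delta - \pi_\delta\|_1 \leq \|\widetilde{\pi}_\delta - \pi_\delta \widetilde{\mathsf{P}}(\delta)^t\|_1 + t\,\|\pi_\delta - \pi_\delta \widetilde{\mathsf{P}}(\delta)\|_1,
\end{equation*}
and then choose $t \asymp \log(1/\varepsilon)/\delta$. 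The first (mixing) term is easy: the PageRank structure forces a probability-$\delta$ restart at every step, so an elementary restart-coupling argument gives $\|\nu \widetilde{\mathsf{P}}(\delta)^t - \widetilde{\pi}_\delta\|_1 \leq 2(1-\delta)^t \leq 2e^{-\delta t}$ from any starting $\nu$, making this term $O(\varepsilon)$ for the chosen $t$.

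The work is in the second ``residual'' term, which quantifies how far $\pi_\delta$ is from being stationary for the corrupted PageRank kernel. Using the stationarity of $\pi_\delta$ under the \emph{uncorrupted} PageRank chain and the fact that both kernels share the same restart component, a direct computation gives
\begin{equation*}
\pi_\delta - \pi_\delta \widetilde{\mathsf{P}}(\delta) = (1-\delta)\,\pi_\delta(\mathsf{P} - \widetilde{\mathsf{P}}),
\end{equation*}
so it suffices to bound $\|\pi_\delta(\mathsf{P} - \widetilde{\mathsf{P}})\|_1 \leq \sum_x \pi_\delta(x)\,g(x)$, where $g(x) := \sum_y |\mathsf{P}(x,y)-\widetilde{\mathsf{P}}(x,y)| \leq 2$.

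The main obstacle is converting the $\pi$-weighted corruption hypothesis $\sum_x \pi(x) g(x) \leq \varepsilon$ into a $\pi_\delta$-weighted bound: this is exactly where the smoothness of $\mu$ and the dual exponent $q$ enter. I would apply H\"older in $L^p(\pi)$:
\begin{equation*}
\sum_x \pi_\delta(x) g(x) = \left\langle \tfrac{\pi_\delta}{\pi},\, g\right\rangle_{\pi} \leq \left\|\tfrac{\pi_\delta}{\pi}\right\|_{p,\pi}\,\|g\|_{q,\pi}.
\end{equation*}
For the density factor, the von Neumann expansion $\pi_\delta = \delta\sum_{k\geq 0}(1-\delta)^k \mu\mathsf{P}^k$ combined with the identification $(\mu\mathsf{P}^k/\pi)^T = (\mathsf{P}^*)^k (\mu/\pi)^T$ from \Cref{fact:adj_dist} and the $L^p$-contractivity of the adjoint Markov kernel $\mathsf{P}^*$ via \Cref{fact:contract} gives $\|\pi_\delta/\pi\|_{p,\pi} \leq \|\mu/\pi\|_{p,\pi} \leq \beta$, exactly paralleling \Cref{eq:contract} but in $L^p$ rather than $L^\infty$. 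For $g$, the pointwise bound $g \leq 2$ yields $g^q \leq 2^{q-1} g$, so $\|g\|_{q,\pi}^q \leq 2^{q-1}\varepsilon$, i.e., $\|g\|_{q,\pi} \lesssim \varepsilon^{1/q}$. Combining gives a residual bound of $O(\beta\,\varepsilon^{1/q})$.

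Plugging both estimates back into the triangle inequality with $t \asymp \log(1/\varepsilon)/\delta$ gives
\begin{equation*}
\|\widetilde{\pi}_\delta - \pi_\delta\|_1 \lesssim \varepsilon + \frac{\log(1/\varepsilon)\,\beta\,\varepsilon^{1/q}}{\delta},
\end{equation*}
and since $\varepsilon \leq \varepsilon^{1/q}$ for $q \geq 1$ and $\delta, \varepsilon \leq 1$, the second term dominates, yielding the claimed bound. Besides the H\"older-based transfer of measure, every step is either a direct application of \Cref{lem:coupling_main}, a routine PageRank restart coupling, or the $L^p$-contractivity of adjoint Markov kernels, all of which are already developed in the excerpt.
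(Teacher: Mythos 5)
Your proposal is correct and essentially identical to the paper's own proof: the same application of \Cref{lem:coupling_main} to $\widetilde{\mathsf{P}}(\delta)$, the same restart-coupling bound for the mixing term, the same stationarity-of-$\pi_\delta$ reduction of the residual to $\pi_\delta(\mathsf{P}-\widetilde{\mathsf{P}})$, the same H\"older step in $L^p(\pi)$ with the $(\beta,p)$-smoothness transferred through the von Neumann expansion and $L^p$-contractivity of $\mathsf{P}^*$, and the same choice $t \asymp \log(1/\varepsilon)/\delta$. The only cosmetic difference is that the paper bounds the $L^q$ norm via $d_{\mathsf{TV}}(\mathsf{P}(x,\cdot),\widetilde{\mathsf{P}}(x,\cdot)) \leq 1$ while you bound $g=2\,d_{\mathsf{TV}} \leq 2$ via $g^q\leq 2^{q-1}g$, which changes the absolute constant but nothing else.
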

\begin{proof}
    We again apply \Cref{lem:coupling_main}, but now replacing the role of $\mathsf{P}$ with $\widetilde{\mathsf{P}}_{\delta}$:
    \begin{align*}
        \|\pi_{\delta}-\widetilde{\pi}_{\delta}\|_1&\leq \| \widetilde{\pi_{\delta}}-\pi_{\delta}\widetilde{\mathsf{P}}(\delta)^t\|_1+t\|\pi_{\delta}-\pi_{\delta}\widetilde{\mathsf{P}}(\delta)\|_1.
    \end{align*}
    For the first term, we can use a standard coupling argument as in [Corollary 5.5., Levin-Peres] to assert that 
    \begin{equation*}
        \| \widetilde{\pi_{\delta}}-\pi_{\delta}\widetilde{\mathsf{P}}(\delta)^t\|_1\leq 2\Pr(\tau_{\delta}\geq t)\leq 2\exp(-\delta t),
    \end{equation*}
    where $\tau_{\delta}$ is a geometric random variable with success probability $\delta$ since the total variation distance is bounded by the time that it takes to choose to randomly restart both chains in an optimal coupling starting with $\widetilde{\pi}_{\delta}$ and $\pi_{\delta}$ when transitioning via $\widetilde{\mathsf{P}}$. For the second term, stationarity of $\pi_{\delta}$ with respect to $\mathsf{P}(\delta)$ implies:
    \begin{align*}
        \|\pi_{\delta}-\pi_{\delta}\widetilde{\mathsf{P}}(\delta)\|_1&=\|\pi_{\delta}\mathsf{P}(\delta)-\pi_{\delta}\widetilde{\mathsf{P}}(\delta)\|_1\\
        &=(1-\delta)\|\pi_{\delta}(\mathsf{P}-\widetilde{\mathsf{P}})\|_1\\
        &\leq \sum_{x,y}\pi_{\delta}(x)\vert \mathsf{P}(x,y)-\widetilde{\mathsf{P}}(x,y)\vert.
    \end{align*}
    To conclude, suppose that $\mu$ is $(\beta,p)$-smooth with respect to $\pi$. In this case, using H\"older's inequality gives
    \begin{align*}
        \sum_{x,y}\pi_{\delta}(x)\vert \mathsf{P}(x,y)-\widetilde{\mathsf{P}}(x,y)\vert&=2\mathbb{E}_{X\sim \pi_{\delta}}[d_{\mathsf{TV}}(\mathsf{P}(X,\cdot),\widetilde{\mathsf{P}}(X,\cdot))]\\
        &=2\mathbb{E}_{X\sim \pi}\left[\frac{\mathrm{d}\pi_{\delta}}{\mathrm{d}\pi}(X)d_{\mathsf{TV}}(\mathsf{P}(X,\cdot),\widetilde{\mathsf{P}}(X,\cdot))\right]\\
        &\leq 2\left\|\frac{\pi_{\delta}}{\pi}\right\|_{p,\pi}\|d_{\mathsf{TV}}(\mathsf{P}(X,\cdot),\widetilde{\mathsf{P}}(X,\cdot))\|_{q,\pi}\\
        &\leq 2\varepsilon^{1/q}\left\|\frac{\pi_{\delta}}{\pi}\right\|_{p,\pi}
    \end{align*}
    where in the last step we use the general definition of an $\varepsilon$-corruption with respect to $\pi$ to see that
    \begin{equation*}
        \|d_{\mathsf{TV}}(\mathsf{P}(X,\cdot),\widetilde{\mathsf{P}}(X,\cdot))\|_{q,\pi}=\left(\mathbb{E}_{\pi}[d^q_{\mathsf{TV}}(\mathsf{P}(X,\cdot),\widetilde{\mathsf{P}}(X,\cdot))]\right)^{1/q}\leq \varepsilon^{1/q},
    \end{equation*}
    since the total variation is at most $1$ surely.
    But by the same argument as in \Cref{cor:pr_close} using the contractivity of $\mathsf{P}^*$ in $L^p(\pi)$ by \Cref{fact:contract}, it follows that
    \begin{equation*}
        \left\|\frac{\pi_{\delta}}{\pi}\right\|_{p,\pi}\leq \left\|\frac{\mu}{\pi}\right\|_{p,\pi}\leq \beta,
    \end{equation*}
    by assumption. Putting this together, we find that
    \begin{equation*}
        \|\pi_{\delta}-\widetilde{\pi}_{\delta}\|_1\leq 2\exp(-\delta t)+2\varepsilon^{1/q} \beta t .
    \end{equation*}
    
    We now set $t=\frac{\log(1/\varepsilon)}{\delta}$ to deduce that 
    \begin{equation*}
        \|\pi_{\delta}-\widetilde{\pi}_{\delta}\|_1\leq \frac{C\varepsilon^{1/q}\log(1/\varepsilon)\beta}{\delta}.
    \end{equation*}

\end{proof}

With these preliminary estimates in order, the final observation is that we can balance the scaling of the PageRank matrix with a $(\beta,p)$-smooth restart in a way that scales with both the corruption level and original spectral gap, even though the corrupted chain itself will crucially \emph{not} have any favorable spectral properties. In other words, the PageRank construction serves as a convenient regularizer that controls the effects of the general perturbations allowed under \Cref{def:corruption} under mild conditions on the restart distribution.
\begin{thm}
\label{thm:main_body}
        Let $\mathsf{P}$ be a Markov chain with stationary distribution $\pi$ and spectral gap at least $\gamma$, and suppose that $\widetilde{\mathsf{P}}$ is a $\varepsilon$-corruption. Given the corrupted chain $\widetilde{\mathsf{P}}$ and a restart distribution $\mu$ that is $\beta$-smooth with respect to $\pi$, there is an algorithm that outputs a distribution $\widehat{\pi}$ such that
    \begin{equation*}
        d_{\mathsf{TV}}(\widehat{\pi},\pi)=O\left(\sqrt{\frac{\beta\varepsilon^{1/q}\log\left(\frac{\beta \|\mu/\pi\|_{\infty}}{\varepsilon\gamma}\right)}{\gamma}}\right).
    \end{equation*}
\end{thm}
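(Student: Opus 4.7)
The algorithm simply computes $\widehat{\pi} := \widetilde{\pi}_\delta$, the stationary distribution of the PageRank version $\widetilde{\mathsf{P}}(\delta) = (1-\delta)\widetilde{\mathsf{P}} + \delta \bm{1}^T \mu$ of the corrupted chain with a carefully chosen restart parameter $\delta$. Since $\widetilde{\mathsf{P}}(\delta)$ satisfies $\widetilde{\mathsf{P}}(\delta) \geq \delta \bm{1}^T \mu$ (entrywise), it is aperiodic and irreducible on $\operatorname{supp}(\mu)$ and its stationary distribution can be computed efficiently, e.g.\ via power iteration or by directly solving the linear system $\widehat{\pi} = \delta\mu\bigl(\mathsf{I}-(1-\delta)\widetilde{\mathsf{P}}\bigr)^{-1}$.

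\paragraph{Main decomposition.}
The analysis proceeds by inserting the uncorrupted PageRank stationary distribution $\pi_\delta$ as an intermediate point and applying the triangle inequality:
\begin{equation*}
    d_{\mathsf{TV}}(\widehat{\pi},\pi) \;\leq\; d_{\mathsf{TV}}(\widetilde{\pi}_\delta,\pi_\delta) \;+\; d_{\mathsf{TV}}(\pi_\delta,\pi).
\end{equation*}
For the second term I would invoke \Cref{cor:pr_close}, which upper bounds it by $O\bigl(\delta\log(\gamma\|\mu/\pi\|_\infty/\delta)/\gamma\bigr)$; this is the price paid for regularizing an uncorrupted spectrally-gapped chain by restarts. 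For the first term I would invoke \Cref{cor:corrupted_close}, which exploits the $(\beta,p)$-smoothness of $\mu$ together with the automatic $\delta$-spectral gap of any PageRank chain to give an upper bound of $O\bigl(\beta\varepsilon^{1/q}\log(1/\varepsilon)/\delta\bigr)$; this is the price paid for the corruption, mitigated by the fact that the PageRank chain forgets its past at geometric rate $\delta$.

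\paragraph{Balancing the two terms.}
The two bounds move in opposite directions in $\delta$, so I would choose $\delta$ to equalize them. Ignoring logarithms momentarily, setting $\delta/\gamma \asymp \beta\varepsilon^{1/q}/\delta$ gives $\delta \asymp \sqrt{\beta\varepsilon^{1/q}\gamma}$ and a common value of order $\sqrt{\beta\varepsilon^{1/q}/\gamma}$. Plugging this choice of $\delta$ back into the logarithmic factors in \Cref{cor:pr_close,cor:corrupted_close} produces a logarithm of the form $\log(\beta\|\mu/\pi\|_\infty/(\varepsilon\gamma))$, which matches the theorem's statement. Taking the square root (since total variation is $\tfrac{1}{2}\ell_1$) gives the claimed bound.

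\paragraph{Main obstacle.}
Conceptually the hardest step was already done in \Cref{cor:corrupted_close}: translating the $\pi$-weighted corruption bound into a usable estimate via H\"older's inequality and the $L^p(\pi)$-contractivity of the adjoint $\mathsf{P}^*$. Given those preparatory lemmas, the remaining work here is really just the optimization in $\delta$ and the mild bookkeeping of the logarithmic factors; the only subtlety is to verify that setting $\delta \asymp \sqrt{\beta\varepsilon^{1/q}\gamma}$ lies in $(0,1]$ in the nontrivial regime $\beta\varepsilon^{1/q} \lesssim 1/\gamma$, outside of which the stated bound is vacuous and the theorem holds trivially by taking $\widehat\pi=\mu$ or any distribution.
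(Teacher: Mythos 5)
Your proposal matches the paper's proof essentially step for step: output $\widetilde{\pi}_\delta$, split via the triangle inequality through the intermediate $\pi_\delta$, apply \Cref{cor:pr_close} and \Cref{cor:corrupted_close}, and choose $\delta \asymp \sqrt{\gamma\beta\varepsilon^{1/q}}$ (up to logarithmic factors) to balance the two terms. The only cosmetic difference is your parenthetical attributing the square root to the $\tfrac{1}{2}\ell_1$ normalization of total variation --- the square root in fact arises from balancing $A\delta$ against $B/\delta$, and the $\tfrac{1}{2}$ is just absorbed into the $O(\cdot)$ constant --- but this does not affect the correctness of the argument, and your observation about the vacuous regime $\delta\notin(0,1]$ is a small but welcome addition.
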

\begin{proof}
We simply take $\widehat{\pi}$ to be $\widetilde{\pi}_{\delta}$ for suitably chosen $\delta$ for the PageRank chain induced by $\widetilde{\mathsf{P}}$ and $\mu$. By \Cref{cor:pr_close} and \Cref{cor:corrupted_close} and the triangle inequality,
\begin{align*}
    \|\widehat{\pi}-\pi\|_1&\leq \|\widetilde{\pi}_{\delta}-\pi_{\delta}\|_1+\|\pi_{\delta}-\pi\|_1\\
    &\leq C\left(\frac{\varepsilon^{1/q}\log(1/\varepsilon)\beta}{\delta}+\frac{\delta\log\left(\frac{\|\mu/\pi\|_{\infty}}{\delta}\right)}{\gamma}\right).
\end{align*}
Setting 
\begin{equation*}
    \delta=\Theta\left(\sqrt{\frac{\gamma\varepsilon^{1/q}\log(1/\varepsilon)\beta}{\log(\|\mu/\pi\|_{\infty})}}\right)
\end{equation*}
to approximately balance terms yields
\begin{equation*}
    d_{\mathsf{TV}}(\widehat{\pi},\pi)=O\left(\sqrt{\frac{\beta\varepsilon^{1/q}\log\left(\frac{\beta \|\mu/\pi\|_{\infty}}{\varepsilon\gamma}\right)}{\gamma}}\right).
\end{equation*}
\end{proof}

As an illustrative example, a simple corollary of this result is that in the setting where it is known that that the original stationary distribution is suitably \emph{spread}, one can immediately obtain an approximate version of the stationary distribution under an arbitrary $\varepsilon$-node corruption of the Markov chain:

\begin{cor}
\label{cor:spread}
    Let $\mathsf{P}$ be a Markov chain with stationary distribution $\pi$ and spectral gap at least $\gamma$. Suppose further that it is known that $\alpha/n\leq \pi(x)\le (1/\alpha) n$ for some $\alpha>0$ and that $\widetilde{\mathsf{P}}$ differs arbitrarily from $\mathsf{P}$ on an $\varepsilon$-fraction of rows. Then there is an algorithm that, given the corrupted chain $\widetilde{\mathsf{P}}$, outputs a distribution $\widehat{\pi}$ such that
    \begin{equation*}
        d_{\mathsf{TV}}(\widehat{\pi},\pi)=\widetilde{O}\left(\frac{1}{\alpha}\sqrt{\frac{\varepsilon}{\gamma}}\right).
    \end{equation*}
\end{cor}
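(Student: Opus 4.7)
The plan is to apply \Cref{thm:main_body} with an appropriately chosen restart distribution, translate the node-corruption assumption into an $\varepsilon'$-corruption in the sense of \Cref{def:corruption}, and then simplify. Since the algorithm has no access to $\pi$, the natural choice is the uniform restart $\mu(x) = 1/n$ for all $x \in \mathcal{X}$.

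First I would check that $\mu$ is smooth with respect to $\pi$. The pointwise lower bound $\pi(x) \geq \alpha/n$ immediately gives
\begin{equation*}
    \left\|\frac{\mu}{\pi}\right\|_{\infty} = \max_{x} \frac{1/n}{\pi(x)} \leq \frac{1}{\alpha},
\end{equation*}
so $\mu$ is $(1/\alpha, \infty)$-smooth with respect to $\pi$, and in particular $(1/\alpha, p)$-smooth for every $p \geq 1$. Next, I would translate the node-level corruption into a distributional corruption. Let $\mathcal{C} \subseteq \mathcal{X}$ denote the set of rows on which $\widetilde{\mathsf{P}}$ may differ from $\mathsf{P}$, with $|\mathcal{C}| \leq \varepsilon n$. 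Using the upper bound $\pi(x) \leq 1/(\alpha n)$ and the trivial fact that $\sum_y |\mathsf{P}(x,y) - \widetilde{\mathsf{P}}(x,y)| \leq 2$ for every $x$, we obtain
\begin{equation*}
    \sum_{x,y} \pi(x)\,\bigl|\mathsf{P}(x,y) - \widetilde{\mathsf{P}}(x,y)\bigr| \leq \sum_{x \in \mathcal{C}} 2\pi(x) \leq 2 \cdot \varepsilon n \cdot \frac{1}{\alpha n} = \frac{2\varepsilon}{\alpha},
\end{equation*}
so $\widetilde{\mathsf{P}}$ is a $(2\varepsilon/\alpha)$-corruption of $\mathsf{P}$ in the sense of \Cref{def:corruption}.

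Now I would invoke \Cref{thm:main_body} with parameter choice $p = \infty$ (hence dual exponent $q = 1$), smoothness constant $\beta = 1/\alpha$, and corruption level $\varepsilon' = 2\varepsilon/\alpha$. The theorem yields an output distribution $\widehat{\pi}$ with
\begin{equation*}
    d_{\mathsf{TV}}(\widehat{\pi}, \pi) = \widetilde{O}\!\left(\sqrt{\frac{\beta \,(\varepsilon')^{1/q}}{\gamma}}\right) = \widetilde{O}\!\left(\sqrt{\frac{(1/\alpha) \cdot (2\varepsilon/\alpha)}{\gamma}}\right) = \widetilde{O}\!\left(\frac{1}{\alpha}\sqrt{\frac{\varepsilon}{\gamma}}\right),
\end{equation*}
where the $\widetilde{O}$ absorbs the logarithmic dependence on $\|\mu/\pi\|_\infty \leq 1/\alpha$, $1/\varepsilon$, and $1/\gamma$. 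This is exactly the claimed bound.

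The entire argument is essentially a direct substitution into \Cref{thm:main_body}, so there is no genuine obstacle. The only step requiring care is the dual-exponent choice: one might naively take $p = 2$, but this gives a suboptimal $\varepsilon^{1/4}$ rate, and the correct calibration is $p = \infty$ so that $\varepsilon^{1/q} = \varepsilon$ matches the node-corruption translation linearly. This relies on the fact that the pointwise two-sided bound on $\pi$ gives the strongest possible ($p=\infty$) smoothness of the uniform restart for free.
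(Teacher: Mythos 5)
Your proof is correct and is essentially the same argument as in the paper: instantiate \Cref{thm:main_body} with the uniform restart $\mu = \bm{1}/n$, observe that $\mu$ is $(1/\alpha,\infty)$-smooth, translate the $\varepsilon$-fraction-of-rows assumption into an $O(\varepsilon/\alpha)$-corruption in the sense of \Cref{def:corruption}, and simplify. The remark about why $p=\infty$ is the right calibration is a nice clarification, but the core substitution matches the paper's proof.
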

\begin{proof}
    We may simply instantiate \Cref{thm:main_body} using the uniform restart distribution $\mu=\bm{1}/n$, noting that this distribution is $(\beta,\infty)$-smooth for $\beta=1/\alpha$ and that perturbing a $\varepsilon$-fraction of rows under the assumption is a $\varepsilon/\alpha$-corruption .
\end{proof}

We conclude with some remarks about \Cref{def:corruption} and \Cref{thm:main_body}.
\begin{rmk}[Measuring Corruptions via $\pi$]
\label{rmk:pi_neccessary}
    It is not too difficult to see that for general Markov chains, the quantification of corruption must account for the heterogeneous importance of sites as measured by the stationary distribution. For instance, consider the star graph on $n+1$ vertices where each site holds with probability $1/2$, and outer nodes move to the center with probability $1/2$, while the central node moves with probability $1/2n$ to each outer node. The stationary distribution $\pi_1$ is easily seen to have mass $1/2$ on the center and $1/2n$ for each outer node by inspection. It is straightforward to see that the mixing time is $\Theta(1)$, since the chain perfectly mixes after transitioning from the center node.

    However, consider the corruption of just two nodes where the center and a single outer node are changed as follows: the central node instead transitions to a distinguished outer node with probability $1/4$ and scales down the other transitions to $1/4(n-1)$. It is straightforward to compute that the stationary distribution $\widetilde{\pi}$ places mass $1/2$ on the central node, $1/4$ on the distinguished outer node, and $1/4(n-1)$ on the remaining outer nodes; moreover, the mixing time is again $\Theta(1)$ since the chain again perfectly mixes once at the central node. In particular, both the original chain and corrupted chains satisfy similar lower bounds on the probability mass of all nodes and have similar spectral properties; therefore, it is information-theoretically impossible to tell which was the original chain. Since $\mathsf{d}_{\mathsf{TV}}(\pi,\widetilde{\pi})\approx 1/2$, no estimator can recover the stationary distribution to accuracy better than roughly $1/4$. The issue is that the central node can exert a large influence on the stationary distribution without affecting much of the spectral properties; therefore, it is necessary for the notion of corruption to reflect this relative importance even for otherwise nice chains.
\end{rmk}

\begin{rmk}
\label{rmk:gap}
    Similarly, it is easy to see that the behavior of $\varepsilon/\gamma$ governs the information-theoretic recoverability of the stationary distribution. In particular, it is impossible to recover the stationary distribution to accuracy better than $\varepsilon/2\gamma$. To see this, consider the product chain on $\{-1,1\}^n$ where at each step, a random coordinate in $[n]$ resamples the $i$th coordinate to be $1$ with probability $p_i$ and $-1$ with probability $1-p_i$. For any choice of the $p_i$, this distribution is a product chain and hence has spectral gap $\gamma=1/n$. On the other hand, the two chains where $p_1=1/2+\delta/2$ and $\widetilde{p}_1=1/2-\delta/2$ can be viewed as $\varepsilon=\delta/n$ corruptions of each other, and the corresponding total variation distance is also $\delta=\varepsilon/n$ simply by considering the first coordinate. Therefore, no algorithm can guess which was the original chain better than $\varepsilon/2\gamma$ in total variation.
\end{rmk}

\section{Conclusion and Open Questions}
In this work, we have shown how under mild conditions, one can approximately recover the true stationary distribution under general perturbations beyond what can be attained using matrix perturbation theory. There are several immediate questions that are exciting directions for future work: 
\begin{enumerate}
    \item As we discussed previously, the  asymptotic behavior of $\varepsilon/\gamma$ determines whether recovery is attainable even under the restricted family of perturbations considered in prior work. As we discussed, under the weaker family of corruptions obtained by $\varepsilon$ perturbations of each site, several authors have used variants of \Cref{lem:coupling_main} to attain a linear dependence on this quantity just by computing the new, corrupted distribution. 
    Is near-linear scaling possible for the general corruptions considered under \Cref{def:corruption} even in the setting of \Cref{thm:main_body} with $p=\infty$?
    \item To what extent can one relax the assumption that there is a known restart $\mu$ that is smooth? Can this assumption be relaxed to hold just on small enough subsets or dispensed with altogether? As we argued in our motivating example, it may be reasonable to assume approximate knowledge of the uncorrupted chain. But one hint that such an assumption may not be algorithmically necessary is that if there are states with atypically small mass relative to $\gamma$, then the larger states of the uncorrupted Markov chain should not transition there much. Under \Cref{def:corruption}, this should remain approximately remain true for the corrupted transitions, which suggests that one may be able to use the corrupted transitions to identify atypically small mass states in the true stationary distribution. The challenge, of course, is that the large states will need to be determined from the corrupted chain.
    \item The approach taken by this work is to assume a spectral gap bound and an $\ell_p$ bounded estimate of the original chain. Are there interesting, restricted families of Markov chains and perturbations where recovery is possible under alternate structural assumptions, and if so, what algorithms attain them? 
    \item In several applications, like high-dimensional spin systems, recovering the true stationary distribution up to total variation may be challenging due to the exponential state space in the number of physical sites. Rather, one may instead wish to compute a distribution that approximates the true stationary distribution with respect to some particular class of test functions $\mathcal{F}$. The total variation metric corresponds to the very large class of all bounded functions. But in important applications, one may instead care about simpler functions, like low-degree polynomials, which is still broad enough to capture physically important quantities like the magnetization. Can improved guarantees be obtained under weaker conditions for these kinds of interesting settings?
\end{enumerate}

\bibliographystyle{alpha}
\bibliography{bibliography}

\end{document}